\definecolor{amber}{rgb}{1.0, 0.75, 0.0}
\newcommand{\N}{\mathbb{N}}
\newcommand{\p}{\varphi}
\newcommand{\e}{\varepsilon}
\newcommand{\Ma}{\mathcal{M}}
\newcommand{\n}[1]{\|#1\|}
\newcommand{\nn}[1]{{\vert\kern-0.25ex\vert\kern-0.25ex\vert #1 
    \vert\kern-0.25ex\vert\kern-0.25ex\vert}}
\newcommand{\lnn}[1]{{\left\vert\kern-0.25ex\left\vert\kern-0.25ex\left\vert #1 
    \right\vert\kern-0.25ex\right\vert\kern-0.25ex\right\vert}}
\newcommand{\ccup}{\scalebox{0.85}{$\bigcup$}}
\newcommand{\dprime}{{\prime\prime}}
\renewcommand{\leq}{\leqslant}
\renewcommand{\geq}{\geqslant}
\newtheorem{theorem}{Theorem}
\newtheorem{proposition}[theorem]{Proposition}
\newtheorem{corollary}[theorem]{Corollary}
\theoremstyle{definition}
\newtheorem{definition}[theorem]{Definition}
\newtheorem{example}[theorem]{Example}
\theoremstyle{remark}
\newtheorem{remark}{Remark}
\definecolor{darkgreen}{RGB}{0, 153, 51}
\definecolor{violet}{RGB}{112, 73, 170}
\definecolor{darkred}{RGB}{153, 0, 0}
\definecolor{darkdarkblue}{RGB}{0, 0, 102}
\definecolor{darkblue}{RGB}{153, 204, 255}
\definecolor{bluee}{RGB}{204, 230, 255}
\definecolor{bluee2}{RGB}{128, 191, 255}
\definecolor{shadow}{RGB}{82, 122, 122}
\definecolor{my_green1}{RGB}{0, 153, 0}
\definecolor{my_orange}{RGB}{255, 204, 0}
\definecolor{my_yellow}{RGB}{255, 255, 102}
\title{Enveloping balls of Szlenk derivations}
\subjclass[2010]{Primary 46B20, 46B45}
\author{Tomasz Kochanek and Marek Miarka}
\address{Institute of Mathematics, University of Warsaw, Banacha~2, 02-097 Warsaw, Poland}
\email{tkoch@mimuw.edu.pl, m.miarka@uw.edu.pl}
\keywords{Szlenk index, Szlenk derivation, Orlicz sequence space}
\thanks{The work of the first-named author has been supported by the National Science Centre grant no. 2020/37/B/ST1/01052.}
\begin{document}
\begin{abstract}
For Banach spaces with a shrinking FDD, we provide estimates for the radii of the enveloping balls of the $\e$-Szlenk derivations of the dual unit ball. 
\end{abstract}
\maketitle

\section{Introduction}
\noindent
In this note, we are interested in estimation of the radii of the enveloping balls of $\e$-Szlenk derivations of the dual unit ball $B_{X^\ast}$ of a~Banach space $X$. Such derivations were the main ingredient of the definition of Szlenk index, introduced by Szlenk in \cite{szlenk}, where he famously showed that there is no universal Banach space in the class of separable reflexive spaces.

Recall that for a weak$^\ast$-compact set $K\subset X^\ast$, and any $\e\in (0,2)$, the $\e$-{\it Szlenk derivation} of $K$ is defined by
\begin{equation}\label{szlenk_def}
s_\e K=\bigl\{x^\ast\in K\colon \mathrm{diam} (K\cap V)>\e\mbox{ for every }w^\ast\mbox{-open neighborhood of }x^\ast\bigr\}.
\end{equation}
This is not exactly the original Szlenk's definition, but it is most commonly used nowadays. There is a~vast literature on studying the behavior of $\e$-Szlenk indices, that is, the rate of cutting out the dual unit ball by iterating the derivation $s_\e$; this study is connected with the notion of Szlenk power type (see, e.g., \cite{causey}, \cite{JFA}, \cite{PAMS}, \cite{GKL}, \cite{lancien}). However, in some situations, the knowledge about the exact shape of $s_\e B_{X^\ast}$ can be profitable. For example, as it was shown by C\'uth, Dole\v{z}al, Doucha and Kurka \cite{CDDK2}, the form of the $\e$-Szlenk derivation $s_\e B_{c_0^\ast}$ characterizes $c_0$ up to isometric isomorphism among separable $\mathscr{L}_{\infty,+}$-spaces.

\begin{remark}\label{limsup_R}
It is easy to see that for any weak$^\ast$-compact set $K\subset X^\ast$, and any $\e\in (0,2)$, we have $x^\ast\in s_\e K$ if and only there is a~net $(x_\alpha^\ast)_{\alpha\in A}\subset K$ which is weak$^\ast$-convergent to $x^\ast$ and such that for every $\alpha\in A$ there exist $\xi,\eta\in A$ with $\xi,\eta\geq\alpha$ and $\n{x_\xi^\ast-x_\eta^\ast}>\e$. Notice also that if in formula \eqref{szlenk_def} we allow all weak$^\ast$-open neighborhoods $V$ with $\mathrm{diam}(K\cap V)\geq\e$ (this convention is also quite often used), then the last inequality should be replaced by $\limsup_{\xi,\eta}\n{x_\xi^\ast-x_\eta^\ast}\geq\e$ (see \cite[Lemma~2.42]{HSVZ}).
\end{remark}

\begin{definition}
For any Banach space $X$ and $\e\in (0,2)$, we set
$$
r_X(\e)=\sup\bigl\{r>0\colon r B_{X^\ast}\subseteq s_\e B_{X^\ast}\bigr\}
$$
and
$$
R_X(\e)=\inf\bigl\{R>0\colon s_\e B_{X^\ast}\subseteq RB_{X^\ast}\bigr\}.
$$
\end{definition}

Our main goal is to provide lower/upper estimates for $r_X(\e)$ and $R_X(\e)$, respectively, for Banach spaces with a~shrinking FDD. In our examples, we will also consider sequential Orlicz spaces. Recall that $\mathcal{M}\colon [0,\infty)\to [0,\infty)$ is called a~{\it non-degenerate Orlicz function} if it has the following properties:
\begin{itemize}[leftmargin=24pt]
\setlength{\itemindent}{0mm}
\setlength{\itemsep}{2pt}
\item $\mathcal{M}(0)=0$ and $\mathcal{M}(t)>0$ for $t>0$,

\item $\mathcal{M}$ is continuous, convex and strictly increasing, 

\item $\lim\limits_{t\to\infty}\mathcal{M}(t)=\infty$.
\end{itemize}
Let $\ell_{\mathcal{M}}$ be the linear space consisting of all sequences $x =(x_n)^\infty_{n=1}\in \mathbb{R}^\mathbb{N}$ such that $\sum^\infty_{n=1}\mathcal{M}(\tfrac{\abs{x_n}}{\lambda})) <\infty$ for some $\lambda>0$. For $x\in \ell_{\mathcal{M}}$, define 
$$
\n{x}_{\mathcal{M}}=\inf\Big\{\lambda>0\colon \sum^\infty_{n=1}\mathcal{M}\Bigl(\frac{|x_n|}{\lambda}\Bigr)\leq 1\Big\}.
$$
Then, $(\ell_\Ma,\n{\cdot}_\Ma)$ is a Banach space; we call it a~sequential Orlicz space. It may happen that the canonical basic sequence $(e_n)_{n=1}^\infty$ does not span the whole of $\ell_\Ma$. It does if $\Ma$ satisfies the $\Delta_2$-condition at zero, and then $(e_n)_{n=1}^\infty$ is a~symmetric $1$-unconditional basis of $\ell_\Ma$ (for further details, see \cite[Ch.~4]{LT}).

\section{Results and examples}

\noindent
Recall that a~sequence $(E_n)_{n=1}^\infty$ of finite-dimensional subspaces of $X$ is called a~{\it finite-dimensional decomposition} (FDD, for short) of $X$ provided that for every $x\in X$ there is a~unique sequence $(x_n)_{n=1}^\infty$ such that $x_n\in E_n$ for each $n\in\N$ and $x=\sum_{n=1}^\infty x_n$. For any $n\in\N$, we define $Q_n\colon X\to \mathrm{span}\,\ccup_{j\leq n}E_j$ to be the $n^{\mathrm{th}}$ partial sum projection, that is, $Q_n(\sum_{i=1}^\infty x_i)=\sum_{i=1}^n x_i$ whenever $x_i\in E_i$ for every $i\in\N$. We denote by $c_{00}(\oplus_{n=1}^\infty E_n)$ the vector space of all sequences $(x_n)_{n=1}^\infty$ such that $x_n\in E_n$ for each $n\in\N$ and $x_n= 0$ for all but finitely many $n$'s. Let also $X^{(\ast)}$ stand for the norm closure of $c_{00}(\oplus_{n=1}^\infty E_n^\ast)$ in $X^\ast$, where the norm on each $E_n^\ast$ is the norm inherited from $X^\ast$ (this can be different than the norm induced as the dual of $E_n$). The FDD $(E_n)_{n=1}^\infty$ is called {\it shrinking} if $X^\ast=X^{(\ast)}$. In this case, $(E_n^\ast)_{n=1}^\infty$ is an~FDD of $X^\ast$.

\begin{theorem}\label{Thm_1}
Let $X$ be a Banach space with a~shrinking FDD $(E_n)_{n=1}^\infty$.
Suppose that $Z$ is a~Banach space with $X^\ast\subseteq Z$ as sets, and that for some constants $0<C_1,C_2<\infty$, we have
\begin{equation}\label{C1_E}
C_1\n{x^\ast}_Z\leq\n{x^\ast}_{X^\ast}\leq C_2\n{x^\ast}_Z\quad\mbox{for each }\, x^\ast\in X^\ast.
\end{equation}
Assume that $\p,\psi,\chi\colon [0,\infty)\to [0,\infty)$ are continuous bijections such that for each $x^\ast\in X^\ast$ and all sufficiently large $n\in\N$, we have
\begin{equation}\label{psi_E1}
\p(\n{P_n(x^\ast)}_Z)+\psi(\n{(I-P_n)(x^\ast)}_Z)\leq\chi (\n{x^\ast}_Z),
\end{equation}
where $P_n\colon X^\ast\to X^\ast$ is the canonical projection onto $\mathrm{span}\,\ccup_{j\leq n}E_j^\ast$. Then, for every $\e\in (0,2)$, we have
\begin{equation}\label{R_E}
R_X(\e)\leq C_2\p^{-1}\Big\{\chi\Big(\frac{1}{C_1}\Big)-\psi\Big(\frac{\e}{2C_2}\Big)\Big\}.
\end{equation}
\end{theorem}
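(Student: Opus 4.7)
The plan is to take an arbitrary $x^\ast \in s_\e B_{X^\ast}$ and show that $\|x^\ast\|_{X^\ast}$ is bounded by the right-hand side of \eqref{R_E}. The basic strategy is to combine the $\e$-separation built into the definition of $s_\e$ with the decomposition inequality \eqref{psi_E1}, bridging the two norms via \eqref{C1_E}---the $X^\ast$-norm governing the separation and the $Z$-norm governing \eqref{psi_E1}.

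Since $X$ has a shrinking FDD, both $X$ and $X^\ast$ are separable, so $B_{X^\ast}$ is weak$^\ast$-metrizable and Remark~\ref{limsup_R} can be applied sequentially. For $x^\ast \in s_\e B_{X^\ast}$, I would extract a sequence $(y_k^\ast) \subset B_{X^\ast}$ with $y_k^\ast \xrightarrow{w^\ast} x^\ast$ and, after passing to a subsequence, $\|y_{2k-1}^\ast - y_{2k}^\ast\|_{X^\ast} > \e$ for every $k$. Then I would fix an auxiliary $\delta > 0$ and pick $n$ large enough to satisfy both $\|(I-P_n)x^\ast\|_{X^\ast} < \delta$ (available since $(E_n^\ast)$ is an FDD of $X^\ast$) and the applicability of \eqref{psi_E1} at the specific vector $z^\ast$ to be chosen below.

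Since $P_n$ is the adjoint of the partial-sum projection $Q_n$ on $X$, it is weak$^\ast$-to-weak$^\ast$ continuous with finite-dimensional range, hence weak$^\ast$-to-norm continuous on $B_{X^\ast}$; so $P_n y_k^\ast \to P_n x^\ast$ in $X^\ast$-norm. For $k$ large, the triangle inequality together with $\|y_{2k-1}^\ast - y_{2k}^\ast\|_{X^\ast} > \e$ yields $\|(I-P_n)(y_{2k-1}^\ast - y_{2k}^\ast)\|_{X^\ast} > \e - \delta$, so one of the two vectors---call it $z^\ast$---satisfies $\|(I-P_n)z^\ast\|_{X^\ast} > (\e-\delta)/2$ and $\|P_n z^\ast - P_n x^\ast\|_{X^\ast} < \delta$. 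Via \eqref{C1_E} this translates into $\|z^\ast\|_Z \leq 1/C_1$ and $\|(I-P_n)z^\ast\|_Z \geq (\e-\delta)/(2C_2)$. Plugging $z^\ast$ into \eqref{psi_E1} and using monotonicity of $\chi$ and $\psi$ gives
\[
\varphi(\|P_n z^\ast\|_Z) \leq \chi(1/C_1) - \psi\bigl((\e-\delta)/(2C_2)\bigr),
\]
whence $\|P_n z^\ast\|_{X^\ast} \leq C_2\,\varphi^{-1}\bigl(\chi(1/C_1) - \psi((\e-\delta)/(2C_2))\bigr)$. A final triangle inequality delivers $\|x^\ast\|_{X^\ast} \leq \|P_n z^\ast\|_{X^\ast} + 2\delta$, and letting $\delta \to 0^+$, the continuity of $\varphi^{-1}$ and $\psi$ produces \eqref{R_E}.

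The main obstacle I anticipate is the interplay between the two norms: the $\e$-separation is known only in the $X^\ast$-norm while \eqref{psi_E1} lives in the $Z$-norm, and this is what produces the asymmetric factors $1/C_1$ and $\e/(2C_2)$ in the conclusion. A related subtlety is that \eqref{psi_E1} only holds for $n$ sufficiently large in a manner possibly depending on the tested vector, so the quantifiers $\delta, n, k, z^\ast$ must be unravelled carefully---in particular, $n$ has to be chosen to simultaneously meet the smallness condition for $(I-P_n)x^\ast$ and the applicability requirement of \eqref{psi_E1} at the $z^\ast$ that eventually emerges.
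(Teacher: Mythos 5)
Your proposal takes a genuinely different, though ultimately parallel, route. The paper argues by contrapositive: it fixes $x_0^\ast$ with $\n{x_0^\ast}_{X^\ast}>\varrho(\e^\prime)$, introduces a weak$^\ast$-open slab $V_\delta=\{x^\ast: \n{P_N(x^\ast-x_0^\ast)}<\delta\}$, and shows $\mathrm{diam}(V_\delta\cap B_{X^\ast})\leq\e$, so $x_0^\ast\notin s_\e B_{X^\ast}$; the separation hypothesis never enters as a sequence. You instead start from $x^\ast\in s_\e B_{X^\ast}$, pass to the sequential form of Remark~\ref{limsup_R} (legitimate: a shrinking FDD forces $X^\ast$ separable, so $B_{X^\ast}$ is weak$^\ast$-metrizable), isolate a single ``badly behaved'' vector $z^\ast$ with $\n{(I-P_n)z^\ast}_{X^\ast}>(\e-\delta)/2$, and plug $z^\ast$ into \eqref{psi_E1} to bound $\n{P_n z^\ast}$, hence $\n{x^\ast}$. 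In both arguments the asymmetric constants $1/C_1$ (for the full norm via $\chi$) and $\e/(2C_2)$ (for the tail via $\psi$) arise in exactly the same way, and both pass $\delta\to 0^+$ using the same continuity. So the estimates are identical; only the logical scaffolding differs (direct bound on $\n{x^\ast}$ vs.\ explicit small-diameter neighborhood).

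There is one point where your sketch is not fully closed, and you acknowledge it without resolving it: the hypothesis \eqref{psi_E1} is phrased pointwise (``for each $x^\ast$ and all sufficiently large $n$''), yet your $z^\ast$ is produced \emph{after} $n$ is fixed. Enlarging $n$ afterwards does not rescue the argument, because the lower bound $\n{(I-P_{n_0})z^\ast}>(\e-\delta)/2$ is not inherited by $\n{(I-P_n)z^\ast}$ when $n>n_0$. The argument therefore requires the uniform interpretation of ``sufficiently large $n$'' (a threshold independent of $x^\ast$). You should note, though, that this is a shared rather than a distinguishing issue: the paper's Claim likewise applies \eqref{psi_E1} at a fixed $N$ to \emph{every} $x^\ast\in V_\delta\cap B_{X^\ast}$, so it too tacitly assumes uniformity, and in all the intended applications \eqref{psi_E1} in fact holds for all $n$. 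Modulo this reading of the hypothesis, your proof is correct.
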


\begin{proof}
Fix an~arbitrary $\e^\prime\in (0,\e)$ and $x_0^\ast\in X^\ast$ with $\varrho(\e^\prime)<\n{x_0^\ast}_{X^\ast}\leq 1$, where
\begin{equation}\label{x_0_R}
\varrho(t)\coloneqq C_2\p^{-1}\Big\{\chi\Big(\frac{1}{C_1}\Big)-\psi\Big(\frac{t}{2C_2}\Big)\Big\}\quad (0<t<2).
\end{equation}
Choose $N\in\N$ so that $\n{P_N(x_0^\ast)}>\varrho(\e^\prime)$. For any, temporarily fixed, $\delta\in (0,1)$ we define a~weak$^\ast$-open neighborhood of $x_0^\ast$ as
\begin{equation*}\label{V}
V_\delta=\big\{ x^\ast\in X^\ast \colon \n{P_N(x^\ast-x_0^\ast)}_{X^\ast}<\delta\big\}.
\end{equation*}
Note that $V_\delta$ is indeed weak$^{\ast}$-open, as the projection $P_N$ is the adjoint operator, $P_N=Q_N^\ast$, where $Q_N\colon X\to X$ is the canonical projection onto $\mathrm{span}\,\ccup_{j\leq n}E_j$. Hence, $P_N$ is weak$^{\ast}$-to-norm continuous.

\vspace*{2mm}\noindent
\underline{{\it Claim.}} For every $x^\ast\in V_\delta\cap B_{X^\ast}$, we have
\begin{equation}\label{claim_E}
\n{(I-P_N)(x^\ast)}_{X^\ast}<C_2\!\cdot\! A(\delta),
\end{equation}
where
\begin{equation*}
A(\delta)\coloneqq \psi^{-1}\Big\{\chi\Big(\frac{1}{C_1}\Big)-\p\Big(\frac{\varrho(\e^\prime)-\delta}{C_2}\Big)\Big\}.
\end{equation*}

\vspace*{4mm}
Indeed, observe that for any $x^\ast\in V\cap B_{X^\ast}$, we have 
$$
\n{P_N (x^\ast)}_{X^\ast}\geq \n{P_N (x_0^\ast)}_{X^\ast}-\n{P_N(x^\ast-x_0^\ast)}_{X^\ast}>\varrho(\e^\prime)-\delta.
$$
Therefore, applying inequality \eqref{psi_E1} (and enlarging $N$ if necessary), we get
\begin{equation*}
\begin{split}
\n{(I-P_N)(x^\ast)}_{X^\ast} & \leq C_2 \n{(I-P_N)(x^\ast)}_Z\\
&\leq C_2\psi^{-1}\Big\{\chi(\n{x^\ast}_Z)-\p(\n{P_N(x^\ast)}_Z)\Big\}\\
&\leq C_2\psi^{-1}\Big\{\chi\Big(\frac{\n{x^\ast}_{X^\ast}}{C_1}\Big)-\p\Big(\frac{\n{P_N(x^\ast)}_{X^\ast}}{C_2}\Big)\Big\}\\
&<C_2\psi^{-1}\Big\{\chi\Big(\frac{1}{C_1}\Big)-\p\Big(\frac{\varrho(\e^\prime)-\delta}{C_2}\Big)\Big\}=C_2\!\cdot\! A(\delta),
\end{split}
\end{equation*}
which completes the proof of our Claim.

\vspace*{1mm}
For any $y^\ast, z^\ast\in V_\delta\cap B_Y$ we have $\n{P_N(y^\ast-z^\ast)}_{X^\ast}<2\delta$. Hence, in view of our Claim, we get
\begin{equation}\label{y_z}
\begin{split}
\n{y^\ast-z^\ast}_{X^\ast} &\leq \n{P_N(y^\ast-z^\ast)}_{X^\ast}+\n{(I-P_N)(y^\ast)}_{X^\ast}\\
&\quad +\n{(I-P_N)(z^\ast)}_{X^\ast}<2\delta+2C_2\!\cdot\! A(\delta).
\end{split}
\end{equation}
Since for any $\e^\prime\in (0,2)$, we have $\varrho(\e^\prime)\in [0,C_2\p^{-1}(\chi(\tfrac{1}{C_1}))]$ and since the maps $\p$, $\psi^{-1}$ are uniformly continuous on any compact interval, we have
$$
A(\delta)=\psi^{-1}\Big\{\chi\Big(\frac{1}{C_1}\Big)-\p\Big(\frac{\varrho(\e^\prime)}{C_2}\Big)\Big\}+o(1)
$$
uniformly as $\delta\to 0^+$. Therefore, using \eqref{y_z} and the definition of $\varrho(\e^\prime)$ we get
$$
\n{y^\ast-z^\ast}_{X^\ast}<\e^\prime+O(\delta)\quad\mbox{for all }\, y^\ast, z^\ast\in V_\delta\cap B_{X^\ast}.
$$
In other words, there exist constants $D,\delta_1>0$ depending only on $\p,\psi,\chi,C_1,C_2$ such that for each $\delta\in (0,\delta_1)$, and any $y^\ast, z^\ast\in V_\delta\cap B_{X^\ast}$, we have $\n{y^\ast-z^\ast}_{X^\ast}<\e^\prime+D\delta$. Taking sufficiently small $\delta$, e.g. $\delta\leq\tfrac{1}{D}(\e-\e^\prime)$, we find a~weak$^\ast$-open neighborhood $V_\delta$ of $x_0^\ast$ with $\mathrm{diam}(V_\delta\cap B_{X^\ast})\leq\e$, which means that $x_0^\ast\not\in s_\e B_{X^\ast}$. Since $\e^\prime\in (0,\e)$ was arbitrary and $\lim_{\delta\nearrow\e}\varrho(\delta)=\varrho(\e)$, we conclude that no vector of norm larger than $\varrho(\e)$ can belong to $s_\e B_{X^\ast}$ which proves the assertion.
\end{proof}


\begin{theorem}\label{Thm_2}
Let $X$ be a Banach space satisfying one of the following conditions:

\begin{enumerate}[label={\rm (\roman*)},leftmargin=24pt]
\setlength{\itemindent}{0mm}
\setlength{\itemsep}{1pt}
\item $X$ has a shrinking FDD $(E_n)_{n=1}^\infty$;
\item $X^\ast$ has a semi-normalized weak$^{\,\ast}$-null basis $(e_n^\ast)_{n=1}^\infty$.
\end{enumerate}
Let also $Z$ be a~Banach space such that $X^\ast\subseteq Z$ as sets, and that for some $0<C_1,C_2<\infty$ inequality \eqref{C1_E} holds true. Assume that $\p,\psi,\chi\colon [0,\infty)\to [0,\infty)$ are continuous bijections such that the reversed version of inequality \eqref{psi_E1} holds true, i.e. for each $x^\ast\in X^\ast$ and all sufficiently large $n\in\N$, we have
\begin{equation}\label{psi_E2}
\chi(\n{x^\ast}_Z)\leq \p(\n{P_n(x^\ast)}_Z)+\psi(\n{(I-P_n)(x^\ast)}_Z),
\end{equation}
where $P_n\colon X^\ast\to X^\ast$ is the canonical projection onto  $\mathrm{span}\,\ccup_{j\leq n}E_j^\ast$ in the case where assumption {\rm (i)} holds true, or onto $[e_j^\ast]_{1\leq j\leq n}$ in the case where assumption {\rm (ii)} holds true. Then, for every $0<\e<2C_1\psi^{-1}(\chi(\tfrac{1}{C_2}))$, we have
\begin{equation}\label{r_E}
r_X(\e)\geq C_1\p^{-1}\Big\{\chi\Big(\frac{1}{C_2}\Big)-\psi\Big(\frac{\e}{2C_1}\Big)\Big\}.
\end{equation}
\end{theorem}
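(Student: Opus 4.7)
The plan is to fix an arbitrary $x_0^\ast \in X^\ast$ with $\n{x_0^\ast}_{X^\ast}$ strictly less than the right-hand side $\rho$ of \eqref{r_E}, and to construct, via small ``tail'' perturbations, a sequence in $B_{X^\ast}$ weak$^\ast$-converging to $x_0^\ast$ along which consecutive pairs are $\n{\cdot}_{X^\ast}$-separated by more than $\e$; by Remark~\ref{limsup_R}, this yields $x_0^\ast \in s_\e B_{X^\ast}$, whence $r_X(\e)\geq\rho$.

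The assumption $\e<2C_1\psi^{-1}(\chi(1/C_2))$ combined with $\n{x_0^\ast}_{X^\ast}<\rho$ gives $\p(\n{x_0^\ast}_{X^\ast}/C_1) + \psi(\e/(2C_1)) < \chi(1/C_2)$, so by continuity of $\psi$ and $\p$ one may pick
$\alpha\in\bigl(\e/2,\, C_1\psi^{-1}\{\chi(1/C_2)-\p(\n{x_0^\ast}_{X^\ast}/C_1)\}\bigr)$
and retain the strict inequality $\p(\n{x_0^\ast}_{X^\ast}/C_1) + \psi(\alpha/C_1) < \chi(1/C_2)$ with a positive margin. For the perturbations I take, for each $k\in\N$, a vector $u_k^\ast \in E_{k+1}^\ast$ of $X^\ast$-norm one under hypothesis~(i), and $u_k^\ast := e_{k+1}^\ast/\n{e_{k+1}^\ast}_{X^\ast}$ under hypothesis~(ii). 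In both cases $P_k u_k^\ast = 0$ and $u_k^\ast \to 0$ weak$^\ast$: in case~(i) because $|\langle u_k^\ast, x\rangle| = |\langle u_k^\ast, (Q_{k+1}-Q_k)x\rangle| \leq \n{(Q_{k+1}-Q_k)x}\to 0$ for every $x\in X$, and in case~(ii) directly from the semi-normalized weak$^\ast$-nullity of $(e_n^\ast)$.

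The main step is to verify that $y_k^\pm := x_0^\ast \pm \alpha u_k^\ast$ lies in $B_{X^\ast}$ for all sufficiently large $k$. Applying \eqref{psi_E2} to $y_k^\pm$ at projection level $k$ and using the identities $P_k y_k^\pm = P_k x_0^\ast$ and $(I-P_k)y_k^\pm = (I-P_k)x_0^\ast \pm \alpha u_k^\ast$, together with $\n{\cdot}_Z\leq\n{\cdot}_{X^\ast}/C_1$, the norm convergences $P_k x_0^\ast \to x_0^\ast$ and $(I-P_k)x_0^\ast \to 0$ (a standard consequence of the shrinking FDD in case~(i), and of $(e_n^\ast)$ being a basis of $X^\ast$ in case~(ii)), and the continuity of $\p$ and $\psi$, one arrives at
$\chi(\n{y_k^\pm}_Z) \leq \p(\n{x_0^\ast}_{X^\ast}/C_1) + \psi(\alpha/C_1) + o(1) < \chi(1/C_2)$
for $k\geq K$, so $\n{y_k^\pm}_Z < 1/C_2$ and hence $\n{y_k^\pm}_{X^\ast}\leq C_2\n{y_k^\pm}_Z<1$.

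Interleaving $(y_k^+)_{k\geq K}$ with $(y_k^-)_{k\geq K}$ then produces a sequence in $B_{X^\ast}$ that converges weak$^\ast$ to $x_0^\ast$ and satisfies $\n{y_k^+ - y_k^-}_{X^\ast}=2\alpha>\e$ for every $k\geq K$, so Remark~\ref{limsup_R} gives $x_0^\ast\in s_\e B_{X^\ast}$. Letting $\n{x_0^\ast}_{X^\ast}$ range over all values $<\rho$ completes the proof. The main technical point I expect to tie up is the joint choice of thresholds: the ``sufficiently large $n$'' clause in \eqref{psi_E2} depends on the element to which it is applied, whereas here that element $y_k^\pm$ itself varies with $k$. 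This is resolved by observing that $\n{y_k^\pm}_{X^\ast}\leq \n{x_0^\ast}_{X^\ast}+\alpha$ is uniformly bounded, so the threshold $K$ can be taken to depend only on $x_0^\ast$ and $\alpha$.
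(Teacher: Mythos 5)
Your proof is correct and follows essentially the same strategy as the paper's: perturb $x_0^\ast$ (the paper uses $P_n x_0^\ast$, which is the same up to an $o(1)$ term) by $\pm$ small multiples of weak$^\ast$-null tail vectors, use \eqref{psi_E2} and \eqref{C1_E} to keep the perturbations in $B_{X^\ast}$, and invoke the characterization of $s_\e$ from Remark~\ref{limsup_R}. Your normalization of $u_k^\ast$ in the $X^\ast$-norm is a mild simplification — it makes the separation exactly $2\alpha$ and lets you dispense with the parameter $\mu=\limsup_n\n{e_n^\ast}_Z$ and the double slack $\e<\e'<\e''$ that the paper uses — and the threshold subtlety you flag at the end is present in the paper's argument as well (both proofs tacitly read the ``for all sufficiently large $n$'' in \eqref{psi_E2} as uniform enough to apply along the sequence $y_k^\pm$).
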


\begin{proof}
Take $0<\e<\e^\prime<\e^{\dprime}<2C_1\psi^{-1}(\chi(\tfrac{1}{C_2}))$ and fix any $x_0^\ast\in X^\ast$ with 
\begin{equation}\label{r_i_E}
\n{x_0^\ast}_{X^\ast}\leq \rho(\e^\dprime)\coloneqq C_1 \p^{-1}\Big\{\chi\Big(\frac{1}{C_2}\Big)-\psi\Big(\frac{\e^\dprime}{2C_1}\Big)\Big\}.
\end{equation}
In the case where assumption (i) holds true, we pick for every $n\in\N$, $e_n^\ast\in E_n^\ast$ with $\n{e_n^\ast}=1$. In the case where (ii) holds true, $(e_n^\ast)_{n=1}^\infty$ is as in the statement above. Define
\begin{equation}\label{mu_def}
\mu=\limsup_{n\to\infty}\n{e_n^\ast}_Z;
\end{equation}
notice that $0<\mu<\infty$ as $(e_n^\ast)_{n=1}^\infty$ is semi-normalized in case (ii) and, of course, $\mu=1$ in case (i). Notice also that in both cases we have $w^\ast$-$\lim_n e_n^\ast=0$.

Consider two sequences $(y_{n,+}^\ast)_{n=1}^\infty$ and $(y_{n,-}^\ast)_{n=1}^\infty$ in $X^\ast$ defined by
$$
y_{n,\pm}^\ast=P_n(x_0^\ast)\pm\frac{\e^\prime}{2C_1\mu}e_{n+1}^\ast\quad (n\in\N).
$$

\vspace*{2mm}\noindent
\underline{{\it Claim.}} For infinitely many $n\in\N$, we have
\begin{itemize}[leftmargin=24pt]
\setlength{\itemsep}{3pt}
\item[(i)] $\n{y_{n,+}^\ast-y_{n,-}^\ast}_{X^\ast}>\e$;

\item[(ii)] $\n{y_{n,\pm}^\ast}_{X^\ast}\leq 1$.
\end{itemize}

\vspace*{1mm}\noindent
Indeed, by definition \eqref{mu_def}, we have
\begin{equation*}
\begin{split}
\n{y_{n,+}^\ast-y_{n,-}^\ast}_{X^\ast} &\geq C_1 \n{y_{n,+}^\ast-y_{n,-}^\ast}_Z\\
&=\frac{\e^\prime}{\mu}\n{e_{n+1}^\ast}_Z>\frac{\e^\prime}{\mu}\!\cdot\!\frac{\e}{\e^\prime}\mu=\e
\end{split}
\end{equation*}
for infinitely many $n$'s. Similarly, for all sufficiently large $n\in\N$, we have $\n{e_{n+1}^\ast}_Z<\e^{\dprime}\mu/\e^\prime$. Using inequality \eqref{psi_E2} and the fact that $\lim_n P_n(x_0^\ast)=x_0^\ast$, for large enough $n$'s we get
\begin{equation*}
\begin{split}
\n{y_{n,\pm}^\ast}_Y &\leq C_2\, \n{y_{n,\pm}^\ast}_Z\\
&\leq C_2 \chi^{-1}\Big\{\p(\n{P_n(x_0^\ast)}_Z)+\psi\Big(\frac{\e^\prime}{2C_1\mu}\n{e_{n+1}^\ast}_Z\Big)\Big\}\\
&\leq C_2 \chi^{-1}\Big\{\p(\n{x_0^\ast}_Z)+\psi\Big(\frac{\e^\dprime}{2C_1}\Big)\Big\}\\
& \leq C_2 \chi^{-1}\Big\{\p\Big(\frac{1}{C_1}\n{x_0^\ast}_{X^\ast}\Big)+\psi\Big(\frac{\e^\dprime}{2C_1}\Big)\Big\}\leq 1,
\end{split}
\end{equation*}
where the last inequality follows from \eqref{r_i_E}. Consequently, our claim has been proved.

\vspace*{2mm}
Since $w^\ast$-$\lim_{n\to\infty} y_{n,\pm}^\ast=x_0^\ast$, we see that assertions (i) and (ii) show that for every weak$^\ast$-open neighborhood $U$ of $x_0^\ast$, we have $\mathrm{diam} (B_{X^\ast}\cap U)>\e$ which implies that $x_0^\ast\in s_\e B_{X^\ast}$. As $\e^\dprime$ was an arbitrary number larger than $\e$, we infer that 
$$
\bigcup_{\delta>\e}\rho(\delta) B_{X^\ast}\subseteq s_\e B_{X^\ast}.
$$
Since $\lim_{\delta\searrow\e}\rho(\delta)=\rho(\e)$ and the derivation $s_\e B_{X^\ast}$ is a~norm closed set, we conclude that $\rho(\e)B_{X^\ast}\subseteq s_\e B_{X^\ast}$ as desired.
\end{proof}

\begin{corollary}\label{C_psi}
Let $X$ be Banach space with a~shrinking FDD $(E_n)_{n=1}^\infty$. For $n\in\N$, let $P_n\colon X^\ast\to X^\ast$ be the canonical projection onto $\mathrm{span}\,\ccup_{j\leq n}E_j^\ast$ corresponding to the dual FDD $(E_n^\ast)_{n=1}^\infty$, and suppose that there exist continuous bijections $\p,\psi,\chi\colon [0,\infty)\to [0,\infty)$ of such that every $x^\ast\in X^\ast$ and all sufficiently large $n\in\N$, we have 
\begin{equation}\label{eq_E}
\p(\n{P_nx^\ast})+\psi(\n{(I-P_n)(x^\ast)})=\chi(\n{x^\ast})\quad (x^\ast\in X^\ast).
\end{equation}
Then for every $\e\in (0,2)$, we have 
$$
s_{\e} B_{X^\ast}=\Big\{x^\ast\in X^\ast\colon \n{x^\ast}\leq \p^{-1}\Big(\chi(1)-\psi\Big(\frac{\e}{2}\Big)\Big)\Big\}.
$$
\end{corollary}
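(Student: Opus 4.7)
The plan is to apply Theorems~\ref{Thm_1} and~\ref{Thm_2} in tandem, with the auxiliary Banach space $Z = X^\ast$ and constants $C_1 = C_2 = 1$. Under this choice, the two-sided inequality \eqref{C1_E} is automatic, and the equality hypothesis \eqref{eq_E} yields simultaneously both \eqref{psi_E1} and \eqref{psi_E2}. Since $X$ has a shrinking FDD, hypothesis~(i) of Theorem~\ref{Thm_2} is also in force, so the projections $P_n$ in the two theorems coincide with the ones appearing in \eqref{eq_E}.

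Setting $r_\ast := \p^{-1}\bigl(\chi(1) - \psi(\tfrac{\e}{2})\bigr)$, Theorem~\ref{Thm_1} delivers the upper bound $R_X(\e) \leq r_\ast$, while Theorem~\ref{Thm_2} supplies the matching lower bound $r_X(\e) \geq r_\ast$, at least in the regime $0 < \e < 2\psi^{-1}(\chi(1))$ where the formula is well defined. It remains to upgrade these radial bounds to the set-level equality $s_\e B_{X^\ast} = r_\ast B_{X^\ast}$. To this end I would invoke two elementary facts: the norm-closedness of $s_\e B_{X^\ast}$ (already used at the end of the proof of Theorem~\ref{Thm_2}), which combined with the supremum definition of $r_X(\e)$ gives $r_X(\e) B_{X^\ast} \subseteq s_\e B_{X^\ast}$; and the infimum definition of $R_X(\e)$, which directly yields $s_\e B_{X^\ast} \subseteq R_X(\e) B_{X^\ast}$. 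Together with $r_X(\e) = R_X(\e) = r_\ast$, these containments force the desired equality.

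I do not anticipate any serious obstacle, since the entire argument is essentially bookkeeping on top of the two-sided estimates already proved. The only mild subtlety concerns the boundary regime $\e \geq 2\psi^{-1}(\chi(1))$, where $\chi(1) - \psi(\tfrac{\e}{2})$ is nonpositive and hence lies outside the domain of $\p^{-1}$; in that range the right-hand side of the claim should be read as $\{0\}$, and the conclusion follows from Theorem~\ref{Thm_1} applied with $\e^\prime \nearrow \e$, which squeezes $s_\e B_{X^\ast}$ into arbitrarily small balls.
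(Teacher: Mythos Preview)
Your approach is correct and essentially identical to the paper's: apply Theorems~\ref{Thm_1} and~\ref{Thm_2} with $Z=X^\ast$ and $C_1=C_2=1$, so that the bounds \eqref{R_E} and \eqref{r_E} collapse to the same value $r_\ast$. Your added remarks on upgrading the radial inequalities to the set-level equality and on the boundary regime $\e\geq 2\psi^{-1}(\chi(1))$ are in fact more careful than the paper's one-line proof, which simply observes that the two estimates coincide.
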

\begin{proof}
We apply Theorems~\ref{Thm_1} and \ref{Thm_2} with $C_1=C_2=1$, and observe that the estimates for $R_X(\e)$ and $r_X(\e)$ given by \eqref{R_E} and \eqref{r_E} coincide.
\end{proof}

The next `folklore' result also follows directly from Theorems~\ref{Thm_1} and \ref{Thm_2}, and gives the form of the $\e$-Szlenk derivation of $\ell_p$-sums of finite-dimensional spaces.

\begin{corollary}\label{l_p_C}
Let $X=(\bigoplus_{n=1}^\infty E_n)_p$, where $(E_n)_{n=1}^\infty$ is any sequence of finite-dimensional spaces and $p\in (1,\infty)$. Then, for every $\e\in (0,2)$, we have 
$$
s_\e B_{X^\ast}=\Big(1-\Big(\frac{\e}{2}\Big)^q\Big)^{1/q}B_{X^\ast},\quad\,\mbox{where }\,\,\frac{1}{p}+\frac{1}{q}=1.
$$
\end{corollary}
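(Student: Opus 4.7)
The plan is to recognise Corollary~\ref{l_p_C} as an immediate application of Corollary~\ref{C_psi} with appropriately chosen continuous bijections $\p,\psi,\chi$. First I would note that since every $E_n$ is finite-dimensional and $p\in (1,\infty)$, the natural decomposition $(E_n)_{n=1}^\infty$ is a~shrinking FDD of $X$, and the dual space is isometrically $X^\ast=(\bigoplus_{n=1}^\infty E_n^\ast)_q$, with the canonical projection $P_n$ being the projection onto the first $n$ summands.

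Next I would unpack the $\ell_q$-sum norm: for any $x^\ast\in X^\ast$ written as $x^\ast=\sum_{j=1}^\infty x_j^\ast$ with $x_j^\ast\in E_j^\ast$, one has
\begin{equation*}
\n{P_n(x^\ast)}^q+\n{(I-P_n)(x^\ast)}^q=\sum_{j=1}^{n}\n{x_j^\ast}^q+\sum_{j=n+1}^{\infty}\n{x_j^\ast}^q=\n{x^\ast}^q,
\end{equation*}
so condition \eqref{eq_E} holds (in fact, for every $n\in\N$) with the single choice $\p(t)=\psi(t)=\chi(t)=t^q$, which are continuous bijections of $[0,\infty)$ onto itself.

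Finally I would invoke Corollary~\ref{C_psi}: since $\p^{-1}(s)=s^{1/q}$, $\chi(1)=1$ and $\psi(\e/2)=(\e/2)^q$, the description given there reads
\begin{equation*}
s_\e B_{X^\ast}=\Big\{x^\ast\in X^\ast\colon \n{x^\ast}\leq \Big(1-\Big(\frac{\e}{2}\Big)^q\Big)^{1/q}\Big\},
\end{equation*}
which is the claimed identity. There is no real obstacle here; the only point worth verifying carefully is that the dual FDD of $(\bigoplus_n E_n)_p$ is indeed $(\bigoplus_n E_n^\ast)_q$ with the expected projections, so that Corollary~\ref{C_psi} applies without further adjustment of constants.
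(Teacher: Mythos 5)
Your proof is correct and follows exactly the route the paper takes: the remark immediately after Corollary~\ref{l_p_C} states that the result is obtained from Corollary~\ref{C_psi} by taking $\p=\psi=\chi=t^q$ and using the isometric identification $X^\ast=(\bigoplus_{n=1}^\infty E_n^\ast)_q$. Your additional remarks that reflexivity makes the FDD shrinking and that the $\ell_q$-identity holds for every $n$ are the same standard facts the paper relies on implicitly.
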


\vspace*{1mm}
\begin{remark}
The above statement follows from Corollary~\ref{C_psi} and the fact that the functions $\p(t)=\psi(t)=\chi(t)=t^q$ satisfy equation \eqref{eq_E} for the FDD $(E_n^\ast)_{n=1}^\infty$ of any $\ell_q$-sum $X^\ast=(\bigoplus_{n=1}^\infty E_n^\ast)_q$. Interestingly, among the class of sequential Orlicz spaces, the power functions $\mathcal{M}(t)=t^q$ are the only ones for which equation \eqref{eq_E} is satisfied with $\p=\psi=\chi=\mathcal{M}$. This follows from the following proposition.
\end{remark}

\begin{proposition}
Suppose $\p\colon [0,\infty)\to [0,\infty)$ is a~non-degenerate Orlicz function such that
\begin{equation}\label{eq_P}
\p(\n{P_n(x)}_\p)+\p(\n{(I-P_n)(x)}_\p)=\p(\n{x}_\p)\quad\mbox{for all }\, x\in [e_n]_{n=1}^\infty\subseteq\ell_\p,\, n\in\N.
\end{equation}
Then, there is $q\in [1,\infty)$ such that $\p(t)=\p(1)t^q$ for $t\in [0,\infty)$.
\end{proposition}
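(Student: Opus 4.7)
The plan is to extract the essential functional equation from \eqref{eq_P} by applying it only to two-dimensional vectors $x = x_1 e_1 + x_2 e_2 \in [e_n]_{n=1}^\infty$ and then to exploit the absolute homogeneity of the Luxemburg norm on the copy of $\R^2$ they span. Since \eqref{eq_P} is preserved by the rescaling $\p(t)\mapsto \p(ct)$ (which simply divides the Luxemburg norm by $c$), we may first normalize so that $\p(1) = 1$; then $\n{e_j}_\p = 1$ for every $j$, and for $x_1, x_2 > 0$ the norm $\n{x_1 e_1 + x_2 e_2}_\p$ is the unique $\lambda > 0$ satisfying $\p(x_1/\lambda) + \p(x_2/\lambda) = 1$.

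Specializing \eqref{eq_P} to $n = 1$ and $x = x_1 e_1 + x_2 e_2$ yields $\p(x_1) + \p(x_2) = \p(\n{x}_\p)$, equivalently
\[
\Phi(x_1, x_2) := \p^{-1}\bigl(\p(x_1) + \p(x_2)\bigr) = \n{x_1 e_1 + x_2 e_2}_\p.
\]
Since the right-hand side is a norm on $\R^2$, the function $\Phi$ is positively homogeneous of degree one, i.e.
\[
\p^{-1}\bigl(\p(\mu x_1) + \p(\mu x_2)\bigr) = \mu\, \p^{-1}\bigl(\p(x_1) + \p(x_2)\bigr) \quad (\mu > 0).
\]
Applying $\p$ to both sides and setting $A = \p(x_1)$, $B = \p(x_2)$, one sees that for each fixed $\mu > 0$ the continuous map $M_\mu(A) := \p\bigl(\mu \p^{-1}(A)\bigr)$ satisfies the Cauchy additive equation $M_\mu(A+B) = M_\mu(A) + M_\mu(B)$ for all $A, B \geq 0$. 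Hence $M_\mu(A) = c(\mu) A$; evaluating at $A = 1$ identifies $c(\mu) = \p(\mu)$, and substituting $A = \p(t)$ produces the multiplicativity
\[
\p(\mu t) = \p(\mu)\p(t), \qquad \mu, t > 0.
\]

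A continuous strictly increasing multiplicative bijection of $[0,\infty)$ with $\p(1) = 1$ must be of the form $\p(t) = t^q$ for some $q > 0$, and convexity forces $q \geq 1$. Undoing the initial normalization yields $\p(t) = \p(1) t^q$ in general. I do not foresee a serious obstacle; the whole argument hinges on noticing that \eqref{eq_P} secretly says $\Phi$ equals the Luxemburg norm on $\R^2$, so all necessary homogeneity is built in for free, and one need never appeal to (possibly nonexistent) higher-order smoothness of~$\p$.
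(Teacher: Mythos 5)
Your proof is correct, and it follows the same broad strategy as the paper (extract homogeneity of $F_\p(s,t)=\p^{-1}(\p(s)+\p(t))$, upgrade to multiplicativity of $\p$, conclude that $\p$ is a power function), but the route to the crucial homogeneity is genuinely cleaner. The paper first reformulates \eqref{eq_P} as an equivalent condition $(\star)$ involving arbitrary finite normalizing sequences, specializes to singletons to get an identity for $F_\p$, and then argues via the level sets $S_c=\{(s,t)\colon F_\p(s,t)=c\}$ that $F_\p$ is positively $1$-homogeneous. You instead observe that the $n=1$ case of \eqref{eq_P} applied to $x=x_1e_1+x_2e_2$ literally says $\p^{-1}(\p(x_1)+\p(x_2))=\n{x_1e_1+x_2e_2}_\p$, so homogeneity of $\Phi$ is immediate from the homogeneity of a norm; this renders the detour through $(\star)$ unnecessary. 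From there both arguments produce a Cauchy-type equation; the paper derives $\p(ks)=\beta(k)\p(s)$ and cites Kuczma's theorems, while you directly identify the proportionality factor as $\p(\mu)$, obtain $\p(\mu t)=\p(\mu)\p(t)$, and finish with the elementary logarithmic substitution, which keeps the proof self-contained. Two small points to tidy up: the rescaling $\p(t)\mapsto\p(ct)$ \emph{multiplies} rather than divides the Luxemburg norm by $c$ (since $\n{x}_{\p(c\cdot)}=c\n{x}_\p$), though this does not affect the validity of the normalization; and it is worth spelling out that \eqref{eq_P} is preserved under this rescaling because, writing $\p_c(t)=\p(ct)$, the identity $\p_c(\n{\cdot}_{\p_c})=\p(c^2\n{\cdot}_\p)$ reduces \eqref{eq_P} for $\p_c$ at $x$ to \eqref{eq_P} for $\p$ at $c^2x$.
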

\begin{proof}
We start by noticing that equation \eqref{eq_P} is equivalent to the following condition:

\begin{itemize}[leftmargin=20pt]
\item[($\star$)] For any finite sequences $(s_1,\ldots,s_k)$ and $(t_1,\ldots,t_l)$ of nonnegative numbers satisfying
$$
\sum_{i=1}^k\p(s_i)=1=\sum_{j=1}^l\p(t_j),
$$
and any $\lambda,\mu>0$, we have
\begin{equation}\label{claim_sum}
\sum_{i=1}^k\p\Bigg(\frac{\lambda s_i}{\p^{-1}(\p(\lambda)+\p(\mu))}\Bigg)+\sum_{j=1}^l\p\Bigg(\frac{\mu t_j}{\p^{-1}(\p(\lambda)+\p(\mu))}\Bigg)=1.
\end{equation}
\end{itemize}

\vspace*{2mm}\noindent
Indeed, consider any $x\in [e_n]_{n=1}^\infty$ of finite support, $x=\sum_{i=1}^k a_ie_i+\sum_{j=1}^l b_j e_{k+j}$, where not all of $a_i$'s and not all of $b_j$'s are zeros, and put $n=k$ in equation \eqref{eq_P}. Note that $\n{P_nx}_\p$ and $\n{(I-P_n)(x)}_\p$ are defined as the (uniquely determined) numbers $\lambda, \mu>0$ such that
$$
\sum_{i=1}^k\p\Big(\frac{\abs{a_i}}{\lambda}\Big)=1=\sum_{j=1}^l\p\Big(\frac{\abs{b_j}}{\mu}\Big).
$$
Then equation \eqref{eq_E} says that $\p^{-1}(\p(\lambda)+\p(\mu))=\n{x}_\p$ which is the unique positive number $\nu$ satisfying 
\begin{equation}\label{summ}
\sum_{i=1}^k\p\Big(\frac{\abs{a_i}}{\nu}\Big)+\sum_{j=1}^l\p\Big(\frac{\abs{b_j}}{\nu}\Big)=1.
\end{equation}
If we substitute $s_i=a_i/\lambda$ and $t_j=b_j/\mu$, then equality \eqref{summ} becomes \eqref{claim_sum}. This shows that equation \eqref{eq_E} implies ($\star$), since obviously $\lambda$ and $\mu$ could be taken as~arbitrary positive numbers. Conversely, since condition ($\star$) plainly implies \eqref{eq_P} for every $x$ of finite support, we obtain that equation for every element of $[e_n]_{n=1}^\infty$ by the continuity of $\p$.

\vspace*{1mm}
Set $\alpha=\p^{-1}(1)$ and define $F_\p\colon [0,\infty)^2\to [0,\infty)$ by $F_\p(s,t)=\p^{-1}(\p(s)+\p(t))$. Then, condition ($\star$) yields in particular (for $s_1=t_1=\alpha$) that
\begin{equation}\label{F_E}
F_\p\Big(\frac{\alpha s}{F_\p(s,t)},\frac{\alpha t}{F_\p(s,t)}\Big)=\alpha\quad\mbox{for all }\, s,t\in (0,\infty).
\end{equation}
For any $c>0$, define $S_c=\{(s,t)\in [0,\infty)^2\colon F_\p(s,t)=c\}$. Equation \eqref{F_E} then says that if $(s,t)\in S_c$, then $(\tfrac{\alpha s}{c},\tfrac{\alpha t}{c})\in S_\alpha$. This implies that for every $k>0$, $(ks,kt)\in S_{kc}$, i.e. the function $F_\p$ is homogeneous,
$$
F_\p(s,t)=k^{-1}F_\p(ks,kt)\quad\mbox{for all }\, k,s,t\in (0,\infty).
$$
In other words, for any fixed $k>0$, we have $F_\p=F_\psi$, where $\psi(x)=\p(kx)$ for $x\in [0,\infty)$. This can be rewritten in the form 
$$
s+t=\p\circ\psi^{-1}\big[\psi(\p^{-1}(s))+\psi(\p^{-1}(t))\big]\quad (s,t\in (0,\infty)),
$$
which simply means that the function $\Phi=\psi\circ\p^{-1}$ is additive,  i.e. $\Phi(s+t)=\Phi(s)+\Phi(t)$ for all $s,t\in (0,\infty)$. Since $\Phi$ is continuous, we have $\Phi(s)=\beta s$, that is $\psi(s)=\beta \p(s)$, for every $s\in (0,\infty)$ and some $\beta\in (0,\infty)$. Since $k>0$ was arbitrary, we conclude that there is a~function $\beta\colon (0,\infty)\to (0,\infty)$ such that $\p(ks)=\beta(k)\p(s)$ for all $k,s\in (0,\infty)$. From this it follows (see \cite[Thm.~13.3.4]{kuczma}) that $\p$ is proportional to a~(continuous) solutions $f$ of the equation $f(st)=f(s)f(t)$. By our assumption of convexity of $\p$, and by using \cite[Thm.~13.1.3]{kuczma}, we obtain $\p(t)\equiv \p(1)t^q$ with some $q\in [1,\infty)$.
\end{proof}

\begin{remark}
Corollary~\ref{C_psi} allows us to determine all iterates of the Szlenk derivation of $B_{X^\ast}$. Indeed, suppose $X$ satisfies the~assumptions of Corollary~\ref{C_psi} and let $r(\e)=\p^{-1}(\chi(1)-\psi(\tfrac{\e}{2}))$ for $\e\in (0,2)$. Using the formula
$$
s_\e (cK)=cs_{\e/c} (K)\quad\,\, (c>0),
$$
which is valid for any weak$^\ast$-compact set $K\subset X^\ast$ (and which follows easily from Remark~\ref{limsup_R}), we infer that for each $n$, $s_\e^n B_{X^\ast}$ is the ball centered at the origin and with radius $r_n(\e)$, where $(r_n(\e))_{n=1}^\infty$ is defined recursively as follows:
$$
r_{1}(\e)=r(\e),\quad r_{n+1}(\e)=\left\{\begin{array}{rl} r_n(\e)\!\cdot\!r\Big(\dfrac{\e}{r_n(\e)}\Big) & \mbox{if }\,\,r_n(\e)>\dfrac{\e}{2}\\[12pt]
0 & \mbox{if }\,\,r_n(\e)\leq\dfrac{\e}{2}.\end{array}\right.
$$
Therefore,
\begin{equation}\label{Sz_E}
Sz(X,\e)=1+\min\big\{n\in\N\colon r_n(\e)\leq\tfrac{\e}{2}\big\}.
\end{equation}

\noindent
For $X=(\bigoplus_{n=1}^\infty E_n)_p$, where each $E_n$ is finite-dimensional, using Corollary~\ref{l_p_C} we obtain by simple induction $r_n(\e)=(1-n(\tfrac{\e}{2})^q)^{1/q}$ for all $n\in\N$, $\e\in (0,2)$. Thus, \eqref{Sz_E} yields
$$
Sz\Big(\Big(\bigoplus_{n=1}^\infty E_n\Big)_{\!p},\,\e\Big)=\Big\lceil\Big(\frac{\e}{2}\Big)^{\!\!-q}\Big\rceil.
$$
\end{remark}

\begin{example}
For any $A\geq 0$ and $B>0$ consider the non-degenerate Orlicz function $\mathcal{M}_{A,B}(t)=At^4+Bt^2$. For any nonzero $x=(x_n)_{n=1}^\infty\in\ell_{\Ma_{A,B}}$, the norm $\n{x}_{\Ma_{A.B}}$ is \vspace*{-2pt}the unique solution $\lambda>0$ of $\sum_{n=1}^\infty \Ma_{A,B}(\tfrac{\abs{x_n}}{\lambda})=1$. Hence, $\lambda^4-B\n{x}_2^2\lambda^2-A\n{x}_4^4=0$ and an~elementary calculation shows that for every $x\in\ell_{\Ma_{A,B}}$ we have
$$
\sqrt{B}\n{x}_2\leq \n{x}_{\Ma_{A,B}}=\sqrt{\frac{B\n{x}_2^2+\sqrt{B^2\n{x}_2^4+4A\n{x}_4^4}}{2}}\leq \sqrt{\frac{B+\sqrt{B^2+4A}}{2}}\n{x}_2
$$
Denote the two constants occurring at the left- and right-hand side by $C_1$ and $C_2$, respectively. Obviously, $\Ma_{A,B}$ satisfies the $\Delta_2$-condition at zero, hence by \cite[Prop.~4.a.4]{LT}, $(e_n)_{n=1}^\infty$ is a~(symmetric) $1$-unconditional basis of $\ell_{\Ma_{A,B}}$. Since this space is isomorphic (but not isometrically isomorphic unless $A=0$) to $\ell_2$, it is reflexive, so we can apply Theorems~\ref{Thm_1} and \ref{Thm_2} to $X=\ell_{\Ma_{A,B}}^\ast$ and $Z=\ell_2$. This yields:
\begin{itemize}[leftmargin=24pt]
\setlength{\itemsep}{3pt}
\item $R_X(\e)\leq \sqrt{\frac{C_2^2}{C_1^2}-\frac{\e^2}{4}}=\sqrt{\frac{B+\sqrt{B^2+4A}}{2B}-\frac{\e^2}{4}}$\,\,\, for every $0<\e<2$;

\item $r_X(\e)\geq \sqrt{\frac{C_1^2}{C_2^2}-\frac{\e^2}{4}}=\sqrt{\frac{2B}{B+\sqrt{B^2+4A}}-\frac{\e^2}{4}}$\,\,\, for every $0<\e<\frac{2C_1}{C_2}$.
\end{itemize}

\vspace*{1mm}\noindent
Since $\lim_{A\to 0+} C_2=C_1$, we see that a~`stability' effect holds true: the Szlenk derivations of the unit balls of $X^\ast=\ell_{\Ma_{A,B}}$ approach the ball $s_\e B_{\ell_2}$ as $A$ tends to zero, that is,
$$
\lim_{A\to 0+} r_{\ell_{\Ma_{A,B}^\ast}}(\e)=\lim_{A\to 0+}R_{\ell_{\Ma_{A,B}^\ast}}(\e)=\sqrt{1-\frac{\e^2}{4}}.
$$
\end{example}

\vspace*{1mm}\noindent
{\bf Acknowledgement. }The first-named author acknowledges with gratitude the support from the National Science Centre, grant OPUS 19, project no.~2020/37/B/ST1/01052.


\begin{thebibliography}{99}


\bibitem{causey} R.M. Causey, \emph{Power type $\xi$-asymptotically uniformly smooth norms}, Trans. Amer. Math. Soc.~{\bf 371} (2019), 1509--1546.


\bibitem{CDDK2} M. C\'uth, M. Dole\v{z}al, M.~Doucha, O.~Kurka, \emph{Polish spaces of Banach spaces: Complexity of isometry and isomorphism classes}, {\tt arXiv:2204.06834v1}

\bibitem{JFA} S. Draga, T. Kochanek,
\emph{Direct sums and summability of the Szlenk index}, J.~Funct. Anal.~{\bf 271} (2016), 642--671.

\bibitem{PAMS} S. Draga, T. Kochanek,
\emph{The Szlenk power type and tensor products of Banach spaces}, Proc. Amer. Math. Soc.~{\bf 145} (2017), 1685--1698. 



\bibitem{GKL} G. Godefroy, N.J. Kalton, G. Lancien, \emph{Szlenk indices and uniform homeomorphisms}, Trans. Amer. Math. Soc.~{\bf 353} (2001), 3895--3918.

\bibitem{HSVZ} P. H\'ajek, V. Montesinos Santaluc\'{i}a, J.~Vanderwerff and V.~Zizler, \emph{Biorthogonal Systems in Banach Spaces}, Springer 2008.


\bibitem{kuczma} M. Kuczma, \emph{An introduction to the theory of functional equations and inequalities. Cauchy's equation and Jensen's inequality}, 2$^{\mathrm{nd}}$ edition, edited by A.~Gil\'anyi, Birkh\"auser Verlag, Basel 2009.

\bibitem{lancien} G. Lancien, \emph{A~survey on the Szlenk index and some of its applications}, Rev. R.~Acad. Cien. Serie~A. Mat. {\bf 100} (2006), 209--235.

\bibitem{LT} J. Lindenstrauss, L. Tzafriri, \emph{Classical Banach spaces I. Sequence spaces}, Springer-Verlag 1977.

\bibitem{szlenk} W. Szlenk, \emph{The non-existence of a~separable reflexive Banach space universal for all separable reflexive Banach spaces}, Studia Math.~{\bf 30} (1968), 53--61.


\end{thebibliography}
\end{document}